\newcommand{\auskommentieren}[1]{}
\newcommand{\beq}{\begin{equation}}
\newcommand{\eeq}{\end{equation}}
\DeclareMathOperator{\graph}{graph}
\newtheorem{remark}[theorem]{Remark}
\title{Optimal control of elliptic surface PDEs with pointwise bounds on the state} 
\author{Ahmad Ahmad Ali \thanks{Fachbereich Mathematik, Universit\"at Hamburg, Bundesstra\ss e 55, 20146 Hamburg, Germany.
{\tt  ahmad.ali@uni-hamburg.de}} \and Michael Hinze \thanks{Fachbereich Mathematik, Universit\"at Hamburg, Bundesstra\ss e 55, 20146 Hamburg, Germany.
{\tt  michael.hinze@uni-hamburg.de}} \and Heiko Kr\"oner \thanks{Fachbereich Mathematik, Universit\"at Hamburg, Bundesstra\ss e 55, 20146 Hamburg, Germany.
{\tt  heiko.kroener@uni-hamburg.de}}}
\begin{document}
\maketitle
\slugger{mms}{xxxx}{xx}{x}{x--x}

\begin{abstract}
We consider  a
linear-quadratic optimization problem with pointwise bounds on the state for which the constraint is given by the Laplace-Beltrami equation  (to have uniqueness we add an lower order term) on a two-dimensional surface . By using finite elements we approximate the optimization problem by a family of discrete problems 
and prove convergence rates for the discrete controls and the discrete states. Furthermore, assuming (roughly spoken) a higher regularity for the control the order of convergence improves. This extends a result known in an Euclidean setting to the surface case.
\end{abstract}

\begin{keywords}
Linear-quadratic optimal control problem; Laplace-Beltrami equation; finite elements
\end{keywords}

\section{Introduction}
In applications the situation of a (moving) hypersurface seperating two (moving) regions is
a widespread setting to model various phenomena. In this general setting one may think of 
biological processes
happening in these regions or on the interface between these regions.
Examples for this scenario are
cell membranes seperating the environment from the cell interior, or the interface between the two phases
of a two-phase flow where soluble surfactants in the bulk regions
affect a certain interfacial surfactant concentration, see \cite{GarckeLamStinner2014} and the references therein for 
a two-phase flow example. 

It is a natural to consider optimization problems where the surfactant density on the 
surface plays the role of the
state variable and to assume certain pointwise bounds for the state. 
To address control of the general setting above we consider in our paper a 
linear-quadratic PDE-constrained optimization
problem on a fixed hypersurface (and not phenomena or interactions in or with the regions outside 
the hypersurface). 

The corresponding optimization problem in an Euclidean setting is treated in \cite{HinzeUlbrich} and
we will follow the argumentation therein closely.

There are only few papers which deal with the numerics of linear-quadratic, pde constrained 
optimization problems on surfaces. 
In \cite{HinzeVierling2012} an optimal control problem for the Lapace-Beltrami on surfaces is 
considered and a linear-quadratic parabolic control problem on moving 
surfaces is considered in \cite{Vierling2014} in the case of pointwise box constraints and in \cite{HinzeKroener2016} in the case of pointwise bounds on the state.

Our paper is organized as follows. 
In Section \ref{section2} we introduce the optimization problem under consideration. Section 
\ref{elementary} contains general material about finite elements on surfaces. Section \ref{section3} states
known $L^{\infty}$-estimates which are the key ingredient in our error estimates. In Section \ref{casas}  we discretize the state equation and in Section \ref{section5} the control problem. Our error estimates are formulated
and proved in Section \ref{section6}. 

\section{The optimization problem}\label{section2}
Let $S$ be a two-dimensional, closed, orientable, embedded surface in $\mathbb{R}^3$ and $(U, (\cdot, \cdot)_U)$ a Hilbert space. We consider the optimization problem
\begin{equation} \label{21}
\begin{cases}
\min_{(y,u)\in Y \times U_{ad}} J(y,u) = \frac{1}{2}\int_S|y-y_0|^2 + \frac{\alpha}{2}\|u-u_0\|_U^2 \\
\text{s.t.} \\
Ay =  Bu  \\
y \in  Y_{ad} = \{y \in L^{\infty}(S): y \le b\}.
\end{cases}
\end{equation}
Here, 
\begin{equation}
A: H^2(S)\rightarrow L^2(S), \quad Ay:= -\Delta_S y+y,
\end{equation}
$\alpha>0$, $y_0 \in H^1(S)$, $u_0\in U$, $b \in L^{\infty}(S)$, $R:U^{*}\rightarrow U$ denotes the inverse of the Frechet-Riesz isomorphism
and 
\begin{equation}
B:U \rightarrow L^2(S) \subset H^1(S)^{*}
\end{equation}
is a linear, continuous operator. We make the assumption that
\begin{equation} \label{10}
\exists_{u \in U_{ad}} \quad G(Bu) < b
\end{equation}
where $G=A^{-1}$, $U_{ad}\subseteq U$ closed and convex and $Y := H^1(S)$.
We have the following theorem.

\begin{theorem} \label{b3}
Let $u \in U_{ad}$ denote the unique solution to (\ref{21}). Then there exists $\mu \in M(S)$ ($M(S)$ denotes the set of Radon measures on $S$) and
$p\in L^2(S)$ so that with $y=G(Bu)$ there holds
\begin{equation} \label{1000}
\int_SpAv = \int_S(y-y_0)v + \int_Sv d\mu\quad \forall v \in H^2(\Omega), 
\end{equation}
\begin{equation} \label{1001}
(RB^{*}p+\alpha(u-u_0), v-u)_U \ge 0 \quad \forall v \in U_{ad}, 
\end{equation}
and
\begin{equation} \label{1002}
\mu\ge 0, \quad y\le b, \quad \int_S(b-y)d\mu =0.
\end{equation}
\end{theorem}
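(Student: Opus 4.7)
The plan is to apply the standard Karush--Kuhn--Tucker (KKT) theorem in Banach spaces, following the argumentation of Casas for pointwise state constraints as adapted in \cite{HinzeUlbrich} to the Euclidean case. First I would eliminate the state by introducing the continuous affine control-to-state map
\[
T : U \longrightarrow C(S), \qquad Tu := G(Bu),
\]
whose continuity into $C(S)$ (and not just into $L^2(S)$) rests on the $L^{\infty}$-regularity for $A = -\Delta_S + I$ recalled in Section \ref{section3}. The reduced problem then reads $\min_{u \in U_{ad}} \hat J(u) := \tfrac{1}{2}\|Tu - y_0\|_{L^2(S)}^2 + \tfrac{\alpha}{2}\|u - u_0\|_U^2$ subject to $Tu \le b$ in $C(S)$. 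The assumption (\ref{10}) provides a strictly feasible Slater point, which is the correct constraint qualification when the state constraint is viewed as an inequality in the ordered Banach space $C(S)$.

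With this qualification in hand, the generalized KKT theorem yields a multiplier in the topological dual of $C(S)$, i.e.\ $\mu \in M(S)$, satisfying $\mu \ge 0$ together with the complementarity condition $\int_S(b-y)\,d\mu = 0$; this is exactly (\ref{1002}). The stationarity of the associated Lagrangian with respect to $u$ takes the form
\[
\bigl(\alpha(u - u_0),\,v-u\bigr)_U + \bigl\langle T^{\ast}\mu + T^{\ast}(y - y_0),\,v-u\bigr\rangle_{U^{\ast},U} \;\ge\; 0 \qquad \forall v \in U_{ad}.
\]

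To recast this variational inequality as (\ref{1001}) and simultaneously produce the adjoint identity (\ref{1000}), I would introduce $p$ by transposition. Since $G : L^2(S) \to C(S)$ is continuous, its Banach adjoint $G^{\ast}$ maps $M(S)$ into $L^2(S)$, and I set $p := G^{\ast}\bigl(\mu + (y - y_0)\bigr) \in L^2(S)$. Testing against an arbitrary $v \in H^2(S)$ and using that $A : H^2(S) \to L^2(S)$ is an isomorphism on the closed surface (so that $G(Av) = v$) gives
\[
\int_S p\,Av \;=\; \bigl\langle \mu + (y - y_0),\,v\bigr\rangle \;=\; \int_S (y - y_0)\,v + \int_S v\,d\mu,
\]
which is (\ref{1000}). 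Inserting $T^{\ast} = B^{\ast}G^{\ast}$ in the stationarity condition and applying the Riesz map $R : U^{\ast} \to U$ converts the dual pairing into $\bigl(R B^{\ast} p,\,v-u\bigr)_U$, producing (\ref{1001}).

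The main technical obstacle is that $\mu$ is only a Radon measure, so the adjoint equation cannot be treated in the usual $H^1(S)$-weak sense; the transposition construction of $p$ built on the continuity of $G : L^2(S) \to C(S)$ is the key device that nonetheless places $p$ in $L^2(S)$. Equally essential is the Slater condition (\ref{10}): without it the KKT argument in $C(S)$ would deliver at best a finitely additive multiplier rather than an element of $M(S)$, and the complementarity in (\ref{1002}) could not be obtained.
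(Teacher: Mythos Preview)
Your argument is correct and is precisely the Casas-type KKT derivation the paper points to; the paper's own proof is a one-line reference to \cite[Theorem 5.2]{casas2}, and you have simply spelled out that argument in the surface setting. One minor remark: the continuity of $G:L^2(S)\to C(S)$ that you need does not come from Section~\ref{section3} (which treats discrete $L^\infty$-estimates) but from elliptic regularity $G:L^2(S)\to H^2(S)$ together with the Sobolev embedding $H^2(S)\hookrightarrow C(S)$ on the two-dimensional surface.
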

\begin{proof}
The proof of this theorem is along the lines of the the proof of \cite[Theorem 5.2]{casas2} in the Euclidean setting.
\end{proof}

\section{Finite Elements on Surfaces} \label{elementary}
We triangulate $S$ by a family $T_h$ of flat triangles with corners (i.e. nodes) lying on $S$. We denote the surface of class $C^{0,1}$ given by the union of the triangles $\tau \in T_h$ by $S_h$; the union of the corresponding nodes is denoted by $N_h$. Here, $h>0$ denotes a discretization parameter which is related to the triangulation in the following way.
For $\tau \in T$ we define the diameter $\rho(\tau)$  of the smallest disc containing $\tau$, the diameter
 $\sigma(\tau)$ of the largest disc contained in $\tau$ and
\begin{equation}
h = \max_{\tau \in T_h}\rho(\tau), \quad \gamma_h = \min_{\tau \in T_h}\frac{\sigma(\tau)}{h}.
\end{equation}
We assume that the family $(T_h)_{h>0}$ is quasi-uniform, i.e. $\gamma_h \ge \gamma_0 >0$.
We let 
\begin{equation}
X_h = \{v\in C^0(S_h): v_{|\tau}\ \text{linear for all}\ \tau \in T_h \}
\end{equation}
be the space of continuous piecewise linear finite elements.
Let $N$ be a tubular neighborhood of $S$ in which 
the Euclidean metric of $N$ can be written in the coordinates $(x^0, x)=(x^0, x^i)$ of the tubular neighborhood as
\begin{equation}
\bar g_{\alpha \beta} = (dx^0)^2 + \sigma_{ij}(x)dx^idx^j.
\end{equation} 
Here, $x^0$ denotes the globally (in $N$) defined signed distance to $S$ and 
$x=(x^i)_{i=1,2}$ local coordinates for $S$.

For small $h$ we can write $S_h$ as graph (with respect to the coordinates
of the tubular neighborhood) over $S$, i.e.
\begin{equation} \label{30}
S_h = \graph \psi = \{(x^0, x): x^0 = \psi(x), x \in S\}
\end{equation}
where $\psi=\psi_h \in C^{0,1}(S)$ suitable. Note, that 
\begin{equation} \label{31}
|D\psi|_{\sigma}\le c h, \quad |\psi|\le ch^2.
\end{equation}
The induced metric of $S_h$ is given by
\begin{equation}
g_{ij}(\psi(x), x) = \frac{\partial \psi}{\partial x^i}(x) \frac{\partial \psi}{\partial x^j}(x) + \sigma_{ij}(x).
\end{equation}
Hence we have for the metrics, their inverses and their determinants
\begin{equation}
g_{ij}=\sigma_{ij}+O(h^2), \quad 
g^{ij} = \sigma^{ij}+O(h^2) \quad \text{and} \quad
g = \sigma + O(h^2)|\sigma_{ij}\sigma^{ij}|^{\frac{1}{2}}
\end{equation}
where we use summation convention.

 For a function $f:S \rightarrow \mathbb{R}$ we define its lift $\hat f:S_h \rightarrow \mathbb{R}$ to $S_h$ by $f(x) = \hat f(\psi(x), x)$, $x\in S$. For a function $f:S_h \rightarrow \mathbb{R}$ we define its lift $\tilde f:S \rightarrow \mathbb{R}$ to $S$ by $f = \hat{\tilde f}$. This terminus can be obviously extended to subsets.
Let $f \in W^{1,p}(S)$, $g \in W^{1,p^{*}}(S)$, $1\le p \le \infty$ and $p^{*}$
H\"older conjugate of $p$. 
In local coordinates $x=(x^i)$ of $S$ hold
\begin{equation} \label{4}
\int_S \left<D f, D g\right> = \int_S \frac{\partial f}{\partial x^i}\frac{\partial g}{\partial x^j}\sigma^{ij}(x)\sqrt{\sigma(x)}dx^idx^j,
\end{equation}
\begin{equation} \label{5}
\int_{S_h} \left<D \hat f, D \hat  g\right> = \int_{S} \frac{\partial f}
{\partial x^i}\frac{\partial g}{\partial x^j}g^{ij}(\psi(x), x)\sqrt{g(\psi(x), x)}
dx^idx^j,
\end{equation}
\begin{equation}  \label{101}
\int_S \left<D f, D g\right> = \int_{S_h} \left<D \hat f, D \hat  g\right> + 
O(h^2)\|f\|_{W^{1,p}(S)}
\|g\|_{W^{1,p^{*}}(S)},
\end{equation}
and similarly, 
\begin{equation} \label{100}
\int_S f =   \int_{S_h}\hat f+ O(h^2)\|f\|_{L^1(S)}
\end{equation}
where now $f\in L^1(S)$ is sufficient.

The bracket $\left<u,v\right>$ denotes here the scalar product of two tangent vectors $u,v$ (or their covariant counterparts). $\|\cdot \|_{W^{k,p}}$ denotes the usual Sobolev norm, $|\cdot |_{W^{k,p}}=\sum_{|\alpha|=k}\|D^{\alpha}\cdot \|_{L^p}$ and $H^k=W^{k,2}$.

\section{Some $L^{\infty}$-estimates for FE approximations} \label{section3}

We define 
\begin{equation} \label{8}
a:W^{1,p}(S)\times W^{1,p^{*}}(S)\rightarrow \mathbb{R}, \quad a(u,v) =\int_S \left<Du, Dv\right>+uvdx, 
\end{equation}
\begin{equation} \label{9}
a_h:W^{1,p}(S_h)\times W^{1,p^{*}}(S_h)\rightarrow \mathbb{R}, \quad a(u_h,v_h) =\int_{S_h} \left<Du_h, Dv_h\right>
+u_h v_hdx, 
\end{equation}
a discrete operator $G_h:  L^2(S)\rightarrow X_h, v \mapsto G_hv=z_h$ via
\begin{equation} \label{a30}
a_h(z_h, \varphi_h) = \int_{S_h}\hat v\varphi_h \quad \forall \varphi_h \in X_h
\end{equation}
and have the following Lemma.

\begin{lemma} \label{b50}
Let $v\in L^2(S)$ and $z=Gv$, $z_h=G_hv$.

(i) There holds  
\begin{equation} \label{20}
\|z-\tilde z_h\|_{L^{\infty}(S)} \le c h\|v\|.
\end{equation}

(ii) If $v \in W^{1,s}(S)$ for some $1<s<2$ then
\begin{equation}
\|z-\tilde z_h\|_{L^{\infty}(S)} \le c h^{3-\frac{2}{s}} |\log h|\|v\|_{W^{1,s}(S)}.
\end{equation}

(iii) If $v \in L^{\infty}(S)$ then
\begin{equation}
\|z-\tilde z_h\|_{L^{\infty}(S)}  \le c h^2 |\log h|^2\|v\|_{L^{\infty}(S)}.
\end{equation}
\end{lemma}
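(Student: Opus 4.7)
The plan is to reduce the surface finite element analysis to essentially Euclidean analysis via the parametrization of $S_h$ over $S$, and then to invoke the classical regularized Green's function technique for pointwise error estimates (Nitsche, Rannacher--Scott, Schatz--Wahlbin).

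\textbf{Step 1 (Reduction to $S$ via the lift).} First I would transfer everything to $S$ using the lift introduced in Section \ref{elementary}. Let $\tilde X_h \subset H^1(S)$ be the lifted FE space and define $\tilde a_h(\tilde \varphi, \tilde \chi) := a_h(\varphi, \chi)$. By (\ref{101}) and (\ref{100}),
\begin{equation*}
\tilde a_h(u,v) = a(u,v) + E_1(u,v), \qquad |E_1(u,v)| \le c h^2 \|u\|_{W^{1,p}(S)}\|v\|_{W^{1,p^{*}}(S)},
\end{equation*}
so the lifted discrete solution $\tilde z_h$ satisfies a perturbed Galerkin problem on $S$ whose geometric perturbation is $O(h^2)$, i.e.\ of higher order than any of the three target rates.

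\textbf{Step 2 (Regularized Green's function).} Pick $x_0\in S$ with $|z(x_0)-\tilde z_h(x_0)|=\|z-\tilde z_h\|_{L^\infty(S)}$, and let $\delta_h\in C^\infty(S)$ be a smoothed Dirac of unit mass supported in a geodesic ball of radius $O(h)$ around $x_0$. Let $g\in H^2(S)$ solve $Ag=\delta_h$. Up to a mollification error $O(h)\|\nabla(z-\tilde z_h)\|_{L^\infty}$, one has $(z-\tilde z_h)(x_0)\approx \int_S \delta_h(z-\tilde z_h)= a(g,z-\tilde z_h)$. Using Galerkin orthogonality for $\tilde z_h$ (modulo the geometric terms from Step~1) I would subtract an arbitrary $g_h\in \tilde X_h$ and write
\begin{equation*}
(z-\tilde z_h)(x_0) \approx a(g-g_h,\, z-\tilde z_h) + E_2,
\end{equation*}
where $E_2$ collects the $O(h^2)$ perturbations. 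Choosing $g_h=I_h g$, everything reduces to bounds on $\|g-I_h g\|$ paired with $\|z-\tilde z_h\|$ in suitably dual norms, plus elliptic regularity estimates for $g$ and $z$.

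\textbf{Step 3 (The three rates).} For (i) I pair $\|g-I_h g\|_{H^1(S)}\le c h\|g\|_{H^2(S)}\le c h\|\delta_h\|_{L^2(S)}\le c$ with the standard $H^1$ estimate $\|z-\tilde z_h\|_{H^1(S)}\le c h\|v\|_{L^2(S)}$, yielding $O(h)$. For (iii), elliptic regularity gives $z\in W^{2,p}(S)$ with $\|z\|_{W^{2,p}}\le c\,p\,\|v\|_{L^\infty(S)}$, while weighted Schatz--Wahlbin estimates for $g$ give $\|g\|_{W^{2,1}(S)}\le c|\log h|$. Pairing $L^1$ against $L^\infty$ in the duality and optimizing over $p$ produces the two logarithmic factors, leading to $O(h^2|\log h|^2)$. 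For (ii), $v\in W^{1,s}(S)$ with $1<s<2$ places $z$ in an intermediate Sobolev scale; either direct duality using the embedding $W^{1,s}(S)\hookrightarrow L^{s^{*}}(S)$ with $\tfrac{1}{s^{*}}=\tfrac{1}{s}-\tfrac{1}{2}$ and $L^{s^{*}}$-elliptic regularity, or interpolation between (i) and (iii), yields the exponent $3-\tfrac{2}{s}$ together with a single $|\log h|$.

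\textbf{Main obstacle.} The most delicate step will be the weighted Sobolev estimates for the regularized Green's function on the curved surface, since the Schatz--Wahlbin / Rannacher--Scott machinery is classically formulated for Euclidean domains. The standard workaround is to argue in local coordinates on $S$, compare with the Euclidean fundamental solution in $\mathbb{R}^2$, and exploit uniform elliptic regularity on the closed manifold. A secondary difficulty is the bookkeeping of the $O(h^2)$ geometric perturbations throughout the duality argument; one must check that these never destroy the sharp rates, which is automatic up to logarithmic factors but still needs explicit verification.
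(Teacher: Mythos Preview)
Your plan is correct in spirit, but it takes a substantially longer route than the paper. The paper does \emph{not} redo the regularized Green's function machinery on the surface; instead it observes that, after accounting for the $O(h^2)$ geometric perturbation in the bilinear form (your Step~1), the lifted discrete solution fits into the framework of \cite[Theorem~1.2]{schatz1998}, and more directly quotes the surface pointwise estimate \cite[Theorem~3.2]{demlow}:
\[
\|z-\tilde z_h\|_{L^{\infty}(S)} \le c\Bigl(h|\log h|\inf_{\chi\in X_h}\|\nabla_\Gamma(z-\tilde\chi)\|_{L^\infty(S)} + h^2\|v\|_{L^2(S)}\Bigr).
\]
From here parts (ii) and (iii) follow by elliptic regularity alone: for (ii) one uses $v\in W^{1,s}\Rightarrow z\in W^{3,s}\hookrightarrow W^{2,q}$ with $q=\frac{2s}{2-s}$ and the interpolation bound $\inf_\chi\|\nabla(z-\tilde\chi)\|_{L^\infty}\le ch^{1-2/q}\|z\|_{W^{2,q}}$; for (iii) one uses $\|z\|_{W^{2,q}}\le Cq\|v\|_{L^\infty}$ with $C$ independent of $q$ and then sets $q=|\log h|$. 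Your Step~3 arrives at essentially the same endpoint for (iii), and your ``direct duality'' option for (ii) (via $W^{1,s}\hookrightarrow L^{q}$ and $L^q$-regularity) lands on the same $W^{2,q}$ bound as the paper, just through a different embedding.

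Two remarks. First, your proposed \emph{alternative} for (ii) --- interpolating between (i) and (iii) --- does not work as stated, since $W^{1,s}(S)$ is not an interpolation space between $L^2(S)$ and $L^\infty(S)$; stick with the direct regularity route. Second, what you flag as the ``main obstacle'' (weighted Green's function estimates on the curved surface) is precisely what Demlow's paper already supplies, so there is no need to redevelop it; the paper simply cites it. In short: your programme would reprove \cite[Theorem~3.2]{demlow} en route, whereas the paper invokes it and then argues by elliptic regularity, which is considerably shorter.
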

\begin{proof}
The proof of (i) is as in the Euclidean case and uses \cite{Dziuk1988}.

For $\varphi_h\in X_h$ we define
\begin{equation}
\begin{aligned}
F(\tilde \varphi_h) := a(\tilde z_h-z, \tilde \varphi_h)
\end{aligned}
\end{equation}
and estimate
\begin{equation}
\begin{aligned}
F(\tilde \varphi_h) &a_h(z_h, \varphi_h)+O(h^2)\|z-\tilde z_h\|_{W^{1,2}(S)} 
\|\tilde \varphi_h\|_{W^{1,2}(S)} \\
\le&a_h(z_h, \varphi_h)+O(h)\|z-\tilde z_h\|_{W^{1,2}(S)} 
\|\tilde \varphi_h\|_{W^{1,1}(S)}
\end{aligned}
\end{equation}
where we used an inverse estimate.
Hence $F$ extends by Hahn-Banach theorem to an element in $W^{-1,1}(S)$ with norm of order $O(h^2)\|f\|_{L^2(S)}$ and then by a further application of the Hahn-Banach Theorem to an element in $W^{-2,1}(S)$ with
norm of order $O(h^2)\|f\|_{L^2(S)}$. A careful view shows that we are in the situation of \cite[Theorem 1.2]{schatz1998} if $u\in W^{1, \infty}(S)$.  Hence in this case we have
\begin{equation} \label{1010}
\begin{aligned}
\|z-&\tilde z_h\|_{L^{\infty}(S)}  
 \le c
\left(h | \log h|   \inf_{\chi \in X_h} \| \nabla_{\Gamma} (z-\tilde \chi) \|_{W^{1,\infty}(S)} +h^2\|v\|_{L^2(S)} \right).
\end{aligned}
\end{equation}
We remark that estimate (\ref{1010}) is proved in \cite[Theorem 3.2]{demlow}.

Elliptic regularity theory and standard embedding 
theorems imply $z\in W^{3,s}(S)\subset W^{2, q}(S)$, $q= \frac{2s}{2-s}$,
and hence 
\begin{equation}
\|z\|_{W^{2,q}(S)} \le c \|z\|_{W^{3,s}(S)} \le c \|v\|_{W^{1,s}(S)}.
\end{equation}
From (\ref{1010}) and a well-known interpolation estimate we conclude
\begin{equation} \label{1011}
\|z-\tilde z_h\|_{L^{\infty}(S)} \le c h^{2-\frac{2}{q}} |\log h|\|z\|_{W^{2,q}(S)} +ch^2\|v\|\le c h^{3-\frac{2}{s}}|\log h| \|v\|_{W^{1,s}(S)}
\end{equation}
in view of the relation between $s$ and $q$. This proves (ii).

From elliptic regularity theory we know that $z \in W^{2,q}(S)$ for all $1 \le q < \infty$ with
\begin{equation}
\|z\|_{W^{2,q}(S)} \le C q \|v\|_{L^q(S)} \le c q \|v\|_{L^{\infty}(S)}
\end{equation}
where the constant $C$ is independent from $q$. Combining this with the first inequality in (\ref{1011}) gives
\begin{equation} 
\begin{aligned}
\|z-\tilde z_h\|_{L^{\infty}(S)} 
\le c q h^{2-\frac{2}{q}}|\log h|\|v\|_{L^{\infty}(S)}
\end{aligned}
\end{equation}
so that choosing $q = |\log h|$ proves (iii).
\end{proof}

\section{Finite Element Discretization of $A$} \label{casas}
In this section we adapt the argumentation from \cite{casas} to the surface case.
Let $\mu$ be a regular Borel measure in $S$ we consider the following problem
\begin{equation} \label{a2}
Au = -\Delta_Su +u = \mu.
\end{equation}
Here, $u\in L^2(S)$ is a solution of (\ref{a2}) if
\begin{equation} \label{a11}
\int_SuAv dx = \int_Svd\mu \quad \forall v \in H^2(S).
\end{equation}
Note, that $A$ is self-adjoint.
\begin{theorem} \label{a3}
Let $s\in (1,2)$ and $\mu \in M(S)$. Then there exists a unique solution $u\in W^{1,s}(S)$ of (\ref{a2}) and there holds
\begin{equation} \label{a8}
\|u\|_{W^{1,s}(S)} \le c(s)\|\mu\|_{M(S)}.
\end{equation}
\end{theorem}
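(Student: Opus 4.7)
The plan is to follow the duality/transposition strategy of \cite{casas}, the key surface-specific observation being that, since $\dim S=2$, the Sobolev embedding $W^{1,s^*}(S)\hookrightarrow C^0(S)$ with $s^{*}=s/(s-1)$ holds exactly when $s^*>2$, that is, for $s<2$, which is precisely the range in the statement. This lets me view $\mu$ as a bounded functional on the space into which the adjoint solutions will embed.

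Uniqueness is the easy half. If $u_1,u_2\in W^{1,s}(S)\subset L^{1}(S)$ both satisfy (\ref{a11}), then their difference $\bar u$ obeys $\int_S \bar u\,Av\,dx=0$ for all $v\in H^2(S)$; since $A:H^2(S)\to L^2(S)$ is bijective on the closed surface $S$ (every $f\in L^2$ arises as $Av$), this forces $\bar u=0$.

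For existence and the quantitative bound I would mollify $\mu$ to obtain $\mu_n\in C^{\infty}(S)$ with $\mu_n\rightharpoonup^{*}\mu$ in $M(S)$ and $\|\mu_n\|_{L^1(S)}\le c\|\mu\|_{M(S)}$, and let $u_n\in H^2(S)$ be the classical solution of $Au_n=\mu_n$. To bound $u_n$ in $W^{1,s}(S)$ I would use that, by reflexivity, this norm is equivalent to the supremum of $\int_S u_n\phi\,dx+\int_S\nabla u_n\cdot V\,dx$ taken over $\phi\in L^{s^*}(S)$ and tangent fields $V\in L^{s^*}(S;TS)$ with $\|\phi\|_{L^{s^*}}+\|V\|_{L^{s^*}}\le 1$. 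Given such $(\phi,V)$, let $v\in W^{1,s^*}(S)$ be the unique weak solution of
\begin{equation*}
\int_S(\nabla v\cdot\nabla\psi+v\psi)\,dx=\int_S\phi\psi\,dx+\int_S V\cdot\nabla\psi\,dx\qquad\forall\,\psi\in W^{1,s}(S);
\end{equation*}
the $L^{s^*}$-theory for $A=-\Delta_S+I$ on the closed surface $S$ yields $\|v\|_{W^{1,s^*}(S)}\le c(\|\phi\|_{L^{s^*}}+\|V\|_{L^{s^*}})$, and the Sobolev embedding noted above upgrades this to the same bound for $\|v\|_{C^0(S)}$. Testing the identity for $v$ with $\psi=u_n$ and using $Au_n=\mu_n$ in the resulting left-hand side produces
\begin{equation*}
\int_S u_n\phi\,dx+\int_S\nabla u_n\cdot V\,dx=\int_S v\,\mu_n\,dx\le c\|\mu\|_{M(S)}\bigl(\|\phi\|_{L^{s^*}}+\|V\|_{L^{s^*}}\bigr),
\end{equation*}
and taking the supremum over admissible $(\phi,V)$ gives the uniform estimate $\|u_n\|_{W^{1,s}(S)}\le c(s)\|\mu\|_{M(S)}$.

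Finally I would pass to the limit: reflexivity of $W^{1,s}(S)$ extracts a subsequence $u_{n_k}\rightharpoonup u$ in $W^{1,s}(S)$ with the norm estimate preserved, and for any fixed $v\in H^2(S)\subset C^0(S)$ the identity $\int_S u_{n_k}Av\,dx=\int_S v\,\mu_{n_k}\,dx$ passes to the limit (the left side by $Av\in L^{s^*}(S)$ and weak convergence of $u_{n_k}$, the right side by weak-$*$ convergence of $\mu_{n_k}$), identifying $u$ as a solution of (\ref{a11}) and establishing (\ref{a8}). The step I anticipate as the main technical obstacle is the Banach-space isomorphism $A:W^{1,s^*}(S)\to W^{1,s}(S)^{*}$ underlying the existence and $W^{1,s^*}$-estimate for $v$; this is classical Calder\'on--Zygmund $L^p$-theory for the Laplace--Beltrami operator combined with the coercive zeroth-order term, and will just need to be invoked from the literature rather than reproven.
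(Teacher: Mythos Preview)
Your proof is correct and rests on the same duality idea as the paper's---both exploit that for $s\in(1,2)$ the adjoint problem $Av=\psi$ with data in $W^{-1,s^*}(S)$ produces $v\in W^{1,s^*}(S)\hookrightarrow C^0(S)$, so the pairing with $\mu$ is controlled by $\|\mu\|_{M(S)}\|v\|_{C^0}$. The packaging, however, differs. The paper avoids approximation entirely: it introduces the solution operator $T:L^2(S)\to C^0(S)$, $Tf=A^{-1}f$, and simply \emph{defines} $u:=T^{*}\mu\in L^2(S)$ via the adjoint $T^{*}:M(S)\to L^2(S)$; the very weak formulation (\ref{a11}) is then immediate. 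The $W^{1,s}$-regularity is obtained by dualizing only against scalars $\psi\in C^0(S)$, invoking the estimate $\|A^{-1}\psi\|_{C^0(S)}\le c\|\psi\|_{W^{-1,s^*}(S)}$ from \cite{Necas}, and using density of $C^0(S)$ in $W^{-1,s^*}(S)$. Your route instead mollifies $\mu$, bounds the smooth solutions $u_n$ uniformly via the full $W^{1,s}$-$W^{1,s^*}$ duality (pairing with both $\phi$ and a vector field $V$), and passes to the weak limit. Both routes hinge on the same $L^{p}$-isomorphism $A:W^{1,s^*}(S)\to W^{-1,s^*}(S)$, which you correctly flag as the black box to be cited; the paper's version is slightly more economical since it bypasses the limiting argument.
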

\begin{proof}
Let $T: L^2(S)\rightarrow C^0(S)$ be defined by
\begin{equation}
A(Tf) = f, \quad f \in L^2(S). 
\end{equation}
$T$ is well defined in view of $H^2(S)\subset C^0(S)$, linear and continuous. We denote its adjoint operator by $T^{*}\in L(M(S), L^2(S))$. Then we have for all $f\in L^2(S)$ that
\begin{equation} \label{a4}
\int_Sf(T^{*}\mu)dx = \int_S Tf d\mu
\end{equation}
which implies
\begin{equation} \label{a5}
\int_S (T^{*}\mu)Av dx = \int_Svd\mu \quad \forall v\in H^2(S)
\end{equation}
by inserting $f=Av$ in (\ref{a4}). Hence $u=T^{*}\mu$ solves (\ref{a2}). The uniqueness of the solution is obvious. To prove the regularity of $u$ we let $\psi\in C^0(S)$ and $v\in H^2(S)$ be the solution of 
\begin{equation}
Av = \psi.
\end{equation}
From (\ref{a5}) we get
\begin{equation} \label{a6}
\left|\int_S u \psi dx\right| = \left|\int_SuAvdx\right| = \left|\int_Svd\mu\right| \le \|\mu\|_{M(S)} \|v\|_{C^0(S)}.
\end{equation}
By using \cite[Theorem 1.4, p. 319]{Necas} we deduce the existence of $c>0$ so that
\begin{equation} \label{a7}
\|v\|_{C^0(S)} \le c \|\psi\|_{W^{-1,t}(S)}
\end{equation}
where $t>2$ is arbitrary and $c$ depends only on $t$, $S$. 

From (\ref{a6}) and (\ref{a7}) we derive
\begin{equation}
\left|\int_S \psi u dx\right| \le c \|\mu\|_{M(S)} \|\psi\|_{W^{-1,t}(S)}.
\end{equation}
Since $C^0(S)$ is dense in $W^{-1,t}(S)$,  $\frac{1}{s}+\frac{1}{t}=1,$  we conclude that $u \in W^{1,s}(S)$ and (\ref{a8}).
\end{proof}

Let $s \in (1,2)$, $s^{*}$ its H\"older conjugate and consider the bilinear form $a$ in case $p=s$. We consider the following variational problem.  
\begin{equation} \label{a9}
\text{Find } u \in W^{1,s}(S) \text{ so that } a(u,v) = \int_S v d\mu \quad \forall v \in W^{1. s^{*}}(S).
\end{equation}
Note, that in view of $s<2$ we have $s^{*}>2$ so that $W^{1, s^{*}}(S)\subset C^0(S)$.
\begin{theorem}
Problem (\ref{a9}) has a unique solution $u$ and $u$ solves (\ref{a2}).
\end{theorem}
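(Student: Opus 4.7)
The plan is to prove both existence and uniqueness by leveraging Theorem \ref{a3} and an integration-by-parts/density bridge between the $W^{1,s}$--weak formulation (\ref{a9}) and the $H^2$--dual formulation (\ref{a2}).

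\textbf{Existence.} I would take $u\in W^{1,s}(S)$ to be the solution of (\ref{a2}) furnished by Theorem \ref{a3}, and verify that it satisfies (\ref{a9}). For $v\in H^2(S)$, Sobolev embedding on the $2$-dimensional closed surface $S$ gives $v\in W^{1,q}(S)$ for every $q<\infty$, so in particular $Dv\in L^{s^{*}}(S)$. Since $S$ has no boundary, integration by parts yields
\[
a(u,v)=\int_S\langle Du,Dv\rangle+uv\,dx=\int_S u(-\Delta_S v+v)\,dx=\int_S uAv\,dx=\int_S v\,d\mu,
\]
where the last equality is (\ref{a11}). All products are integrable ($Du\in L^s$ against $Dv\in L^{s^{*}}$, and $u\in L^2$ via $W^{1,s}(S)\hookrightarrow L^{2s/(2-s)}(S)\hookrightarrow L^2(S)$ for $s\in(1,2)$ on a $2$-dimensional compact surface). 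I then extend this identity from $v\in H^2(S)$ to $v\in W^{1,s^{*}}(S)$ by density: $C^\infty(S)\subset H^2(S)$ is dense in $W^{1,s^{*}}(S)$ (standard partition-of-unity-plus-mollification argument on a closed manifold), $a(u,\cdot)$ is continuous on $W^{1,s^{*}}(S)$ by Hölder, and $v\mapsto\int_S v\,d\mu$ is continuous because $s^{*}>2$ gives the embedding $W^{1,s^{*}}(S)\hookrightarrow C^0(S)$.

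\textbf{Uniqueness.} Let $u_1,u_2\in W^{1,s}(S)$ solve (\ref{a9}); the difference $w:=u_1-u_2$ satisfies $a(w,v)=0$ for every $v\in W^{1,s^{*}}(S)$. Since $w\in W^{1,s}(S)\hookrightarrow L^2(S)$, I can form $v:=G(w)\in H^2(S)\subset W^{1,s^{*}}(S)$, which satisfies $Av=w$. Running the same integration by parts as above,
\[
0=a(w,v)=\int_S wAv\,dx=\int_S w^2\,dx,
\]
so $w=0$.

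\textbf{Solution of (\ref{a2}).} Because $H^2(S)\subset W^{1,s^{*}}(S)$, one may test (\ref{a9}) with any $v\in H^2(S)$; integration by parts (the same computation, read in reverse) gives $\int_S uAv\,dx=a(u,v)=\int_S v\,d\mu$, which is (\ref{a11}).

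The main technical point is the pair of density and embedding statements on the surface $S$, namely the density of $H^2(S)$ in $W^{1,s^{*}}(S)$ and the Sobolev embedding $W^{1,s}(S)\hookrightarrow L^2(S)$ for $s\in(1,2)$ in the $2$-dimensional setting; everything else is routine integration by parts on a closed manifold. These facts are standard on compact manifolds without boundary (via charts and partitions of unity), and they are what allows the weak formulation (\ref{a9}) to be equivalent to the distributional formulation (\ref{a2}).
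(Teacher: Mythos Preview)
Your argument is correct and follows essentially the same route as the paper: existence via Theorem~\ref{a3} together with the integration-by-parts identity $a(u,v)=\int_S uAv\,dx$ for $v\in H^2(S)$ and density of $H^2(S)$ in $W^{1,s^{*}}(S)$, and the converse by restricting test functions to $H^2(S)$. The only cosmetic difference is in uniqueness: the paper observes that any solution of (\ref{a9}) is a solution of (\ref{a2}) and then invokes the uniqueness already recorded in Theorem~\ref{a3}, whereas you test the difference $w$ directly with $v=G(w)$; both arguments unwind to the same computation $a(w,Gw)=\int_S w^2\,dx$.
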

\begin{proof}
Let $u$ be the solution of (\ref{a2}). We show that $u$ is a solution of (\ref{a9}). From Theorem \ref{a3} we know $u \in W^{1,s}_0(S)$ and from (\ref{a11}) we deduce that
\begin{equation} \label{a10}
\int_S v d\mu = a(u,v) \quad \forall v \in H^2(S).
\end{equation}
Hence $u$ solves (\ref{a2}) since $H^2(S)$ is dense in $W^{1, s^{*}}(S)$.

If $u$ solves (\ref{a9}) then (\ref{a10}) holds and implies (\ref{a2}).
\end{proof}

 Let $\mu\in M(S)$ then 
\begin{equation}
C^0(S_h) \ni u \mapsto \int_S \tilde u d\mu 
\end{equation}
is in $(C^0(S_h))^{*}$, positive and via Riesz representation theorem equal to a $\hat \mu \in M(S_h)$.

The discretization of (\ref{a11}) is given by the following problem.
\begin{equation} \label{a12}
\text{Find } u_h \in X_h \text{ so that } a_h(u_h,v_h) = \int_{S_h} v_h d\mu_h \quad \forall v_h \in X_h
\end{equation}
where $\mu_h \in B_{ch}(\hat \mu) \subset M(S_h)$ arbitrary but now fixed. Existence of a solution of (\ref{a12}) follows from uniqueness.
\begin{remark}
If $\mu \in L^2(S)$ then the discretizations (\ref{a12}) and (\ref{a30}) agree for suitable $\mu_h \in B_{ch}(\hat \mu) \subset M(S_h)$.
\end{remark}
\begin{proof}
Let $\mu=f\in L^2(\Omega)$. The map
\begin{equation}
L^2(S_h)\ni v \mapsto \int_{S_h}\hat f v dx -\int_{S_h}vd\hat \mu = \int_{S_h}\hat f v dx -\int_S \tilde v f dx 
\end{equation}
is in $L^2(S_h)$ with norm less or equal $ch^2$ in view of Section \ref{elementary}.
\end{proof}
\begin{lemma} \label{a14}
Let $v\in H^2(S)$ and $v_h \in X_h$ the unique solution of
\begin{equation}
a_h(w_h, v_h) = a(\tilde w_h, v) \quad \forall w_h \in X_h
\end{equation}
then
\begin{equation}\label{a16}
\|v-\tilde v_h\|_{L^{\infty}(S)} \le c h \|v\|_{H^2(S)}.
\end{equation}
\end{lemma}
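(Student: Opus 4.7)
The plan is to compare $v_h$ with the direct finite-element approximation $z_h := G_h\psi \in X_h$ of $v$ (defined in (\ref{a30})), where $\psi := Av = -\Delta_S v + v \in L^2(S)$. Since $G = A^{-1}$ we have $G\psi = v$, and Lemma \ref{b50}(i) immediately gives
\begin{equation*}
\|v - \tilde z_h\|_{L^{\infty}(S)} \le ch\,\|\psi\|_{L^2(S)} \le ch\,\|v\|_{H^2(S)}.
\end{equation*}
By the triangle inequality it therefore suffices to show that $\|\tilde v_h - \tilde z_h\|_{L^{\infty}(S)} \le ch\,\|v\|_{H^2(S)}$.

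To that end I set $e_h := v_h - z_h$ and examine $a_h(w_h, e_h)$ for arbitrary $w_h \in X_h$. Using Green's formula on the closed surface $S$, which is permissible since $v \in H^2(S)$ solves $Av = \psi$, the defining equation of $v_h$ becomes
\begin{equation*}
a_h(w_h, v_h) = a(\tilde w_h, v) = \int_S \tilde w_h\,\psi,
\end{equation*}
while (\ref{a30}) together with the symmetry of $a_h$ yields $a_h(w_h, z_h) = \int_{S_h} w_h\,\hat\psi$. Subtracting and applying the geometric quadrature estimate (\ref{100}) to the $L^1(S)$ function $\tilde w_h\,\psi$, I obtain
\begin{equation*}
a_h(w_h, e_h) = \int_S \tilde w_h\,\psi - \int_{S_h} w_h\,\hat\psi = O(h^2)\,\|\tilde w_h\|_{L^2(S)}\,\|\psi\|_{L^2(S)}.
\end{equation*}

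Testing this identity with $w_h = e_h$, the coercivity of $a_h$ on $X_h$ (which in particular gives uniqueness and hence existence of $v_h$), combined with the norm equivalences between $S$ and $S_h$ implied by (\ref{100})--(\ref{101}), yields $\|e_h\|_{L^2(S_h)} \le \|e_h\|_{H^1(S_h)} \le ch^2\,\|v\|_{H^2(S)}$. The standard inverse estimate for continuous piecewise linear elements on a two-dimensional, quasi-uniform triangulation provides $\|e_h\|_{L^{\infty}(S_h)} \le ch^{-1}\,\|e_h\|_{L^2(S_h)}$, so that $\|\tilde e_h\|_{L^{\infty}(S)} \le ch\,\|v\|_{H^2(S)}$, completing the argument.

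The only real point of care is the geometric bookkeeping: tracking the $O(h^2)$ perturbations between $S$ and $S_h$ throughout the identities above, and verifying that the inverse estimate, formulated originally on $S_h$, transfers to lifts on $S$ with constants independent of $h$. Conceptually nothing subtle is happening; what makes the rate $h$ (as opposed to, say, $h|\log h|^{1/2}$) is that the $h^{-1}$ factor from the inverse estimate is exactly absorbed by the $h^2$ gain coming from the geometric consistency estimate (\ref{100}).
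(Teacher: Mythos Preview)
Your proof is correct and follows essentially the same route as the paper: set $f=Av$, use the geometric consistency estimate (\ref{100}) to show that $v_h$ differs from the standard discrete solution $G_hf$ only by an $O(h^2)$ perturbation of the right-hand side, and then combine the known $L^\infty$ bound (Lemma~\ref{b50}(i)) with an inverse estimate. The paper's version is terser (it absorbs the perturbation into a modified datum $F$ and defers to ``the Euclidean setting using the $L^2$-estimate from \cite{Dziuk1988}''), but the underlying argument is the same as yours.
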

\begin{proof}
Let $f=Av$ then we have  in view of Section \ref{elementary} that
\begin{equation}
a(\tilde w_h, v) = \int_S\tilde w_h fdx = \int_{S_h}w_h \hat fdx+O(h^2)\|w_h\|_{L^2}\|f\|_{L^2(S)} = \int_{S_h}w_hF
\end{equation}
where $F \in L^2(S_h)$ suitable and $\|\tilde F- f\|_{L^2(S)}\le O(h^2)\|f\|_{L^2(S)}$.
The claim follows as in the Euclidean setting by using the $L^2$-estimate from \cite{Dziuk1988}. 
\end{proof}
\begin{theorem} \label{b10}
Let $u$ be the solution of (\ref{a2}) and $u_h$ the solution of (\ref{a12}). Then
\begin{equation}
\|u-\tilde u_h\|_{L^2(S)} \le c (h\|\mu\|_{M(S)}+\|\hat \mu-\mu_h\|_{M(S_h)}).
\end{equation}
\end{theorem}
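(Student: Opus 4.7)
The plan is a duality argument modeled on the Euclidean proof in \cite{casas}, using Lemma \ref{a14} as the key regularity/error tool.

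First, I fix an arbitrary $\phi \in L^2(S)$ with $\|\phi\|_{L^2(S)} \le 1$ and aim to bound $\int_S (u - \tilde u_h)\phi\, dx$. Let $v = G\phi \in H^2(S)$ denote the solution of $Av = \phi$, so that $\|v\|_{H^2(S)} \le c \|\phi\|_{L^2(S)} \le c$ by elliptic regularity. The identity (\ref{a11}) applied with this $v$ gives the continuous piece
\begin{equation}
\int_S u \phi\, dx = \int_S u Av\, dx = \int_S v\, d\mu.
\end{equation}

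Next, I introduce the dual discrete function $v_h \in X_h$ furnished by Lemma \ref{a14}, i.e.\ $a_h(w_h, v_h) = a(\tilde w_h, v)$ for all $w_h \in X_h$. Testing (\ref{a12}) with $v_h$ and Lemma \ref{a14} with $w_h = u_h$ yields
\begin{equation}
a(\tilde u_h, v) = a_h(u_h, v_h) = \int_{S_h} v_h\, d\mu_h.
\end{equation}
Since $v \in H^2(S)$ and $\tilde u_h \in H^1(S)$, integration by parts in the definition (\ref{8}) of $a$ gives $a(\tilde u_h, v) = \int_S \tilde u_h Av\, dx = \int_S \tilde u_h \phi\, dx$. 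Subtracting,
\begin{equation}
\int_S (u - \tilde u_h)\phi\, dx = \int_S v\, d\mu - \int_{S_h} v_h\, d\mu_h.
\end{equation}

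Now I split this difference along the natural bridge through $\hat\mu$:
\begin{equation}
\int_S v\, d\mu - \int_{S_h} v_h\, d\mu_h = \Bigl(\int_S v\, d\mu - \int_{S_h} \hat v\, d\hat\mu\Bigr) + \int_{S_h} (\hat v - v_h)\, d\hat\mu + \int_{S_h} v_h\, d(\hat\mu - \mu_h).
\end{equation}
The first parenthesis vanishes by the definition of $\hat\mu$ (with test function $\hat v \in C^0(S_h)$). The second is bounded by $\|\hat v - v_h\|_{L^\infty(S_h)} \|\hat\mu\|_{M(S_h)}$, and after lifting this equals (up to equivalence of norms) $\|v - \tilde v_h\|_{L^\infty(S)} \|\mu\|_{M(S)}$, which Lemma \ref{a14} controls by $c h \|v\|_{H^2(S)} \|\mu\|_{M(S)} \le ch\|\mu\|_{M(S)}$. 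For the third term I need a uniform $L^\infty$-bound on $v_h$: writing $\|v_h\|_{L^\infty(S_h)} \le \|v_h - \hat v\|_{L^\infty(S_h)} + \|\hat v\|_{L^\infty(S_h)}$ and invoking Lemma \ref{a14} plus the embedding $H^2(S) \hookrightarrow C^0(S)$ gives $\|v_h\|_{L^\infty(S_h)} \le c \|v\|_{H^2(S)} \le c$, so the third term is bounded by $c\|\hat\mu - \mu_h\|_{M(S_h)}$.

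Combining the three estimates and taking the supremum over $\phi$ with $\|\phi\|_{L^2(S)} \le 1$ gives the claim. The only real bookkeeping obstacle is the transfer between $S$ and $S_h$ — verifying that $\|\hat\mu\|_{M(S_h)} \lesssim \|\mu\|_{M(S)}$ (which follows from boundedness of the lift $C^0(S_h) \to C^0(S)$, cf.\ Section \ref{elementary}) and that the integration by parts producing $a(\tilde u_h, v) = \int_S \tilde u_h \phi\, dx$ is legitimate for $\tilde u_h \in H^1(S)$ and $v \in H^2(S)$; both are routine but must be recorded.
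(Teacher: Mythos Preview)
Your proof is correct and follows essentially the same duality argument as the paper: test against $v=G\phi$, invoke Lemma~\ref{a14} to pass from $a(\tilde u_h,v)$ to $a_h(u_h,v_h)=\int_{S_h}v_h\,d\mu_h$, and then split the resulting measure difference through $\hat\mu$. The only cosmetic difference is that the paper works with $\|v-\tilde v_h\|_{C^0(S)}\|\mu\|_{M(S)}$ on $S$ directly (avoiding the need to check $\|\hat\mu\|_{M(S_h)}\lesssim\|\mu\|_{M(S)}$), whereas you insert $\hat v$ and estimate on $S_h$; both routes are equivalent.
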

\begin{proof}
Let $p \in L^2(S)$ arbitrary and  $v\in H^2(S)$ with 
\begin{equation}
Av = p.
\end{equation}
There holds
\begin{equation}
\begin{aligned}
\int_S(u-\tilde u_h)pdx =&\int_S(u-\tilde u_h)Avdx \\
=& a(u-\tilde u_h, v) \\
=& \int_S v d\mu -a(\tilde u_h, v) \\
=& \int_S v d\mu -a_h(u_h, v_h) \\
\le & \int_S v d\mu - \int_{S_h}v_h d\hat \mu+\|\hat \mu-\mu_h\|_{M(S_h)}\|v_h\|_{C^0(S_h)} \\
=&   \int_S v d\mu - \int_S\tilde v_h d\mu +\|\hat \mu-\mu_h\|_{M(S_h)}\|v_h\|_{C^0(S_h)}\\
\le& \|v-\tilde v_h\|_{C^0(S)}\|\mu\|_{M(S)}+\|\hat \mu-\mu_h\|_{M(S_h)}\|v_h\|_{C^0(S_h)} \\
\le& c(h\|\mu\|_{M(S)}+\|\hat \mu-\mu_h\|_{M(S_h)})\|p\|_{L^2(S)}
\end{aligned}
\end{equation}
where $v_h$ as in Lemma \ref{a14} and we used (\ref{a16}).
\end{proof}

\section{Finite Element Discretization of the optimization problem} \label{section5}
In order to approximate problem (\ref{21}) we  consider the following family of control problems depending on the mesh parameter $h>0$
\begin{equation} \label{2}
\min_{u \in U_{ad}} J_h(u) := \frac{1}{2}\int_{S_h}|y_h-\hat y_0|^2 + \frac{\alpha}{2}\|u-u_{0,h}\|_U^2
\end{equation}
subject to 
\begin{equation} \label{b1}
y_h = G_h(Bu) \quad \wedge \quad y_h(x_j) \le b(x_j), \quad j=1, ..., m.
\end{equation}
Here, $u_{0,h}$ denotes an approximation to $u_0$ with 
\begin{equation}
\|u_0-u_{0,h}\|_{U} \le ch.
\end{equation}
For every $h>0$ the optimization problem  (\ref{2}),(\ref{b1}) agrees with the problem which is stated in \cite[(3.59)]{HinzeUlbrich} apart from the fact that our problem is defined on $S_h$ and the problem stated in \cite[(3.59)]{HinzeUlbrich} is defined in an open and bounded subset $\Omega\subset \mathbb{R}^2$. This difference does not effect the procedure how existence of an optimal solution and necessary optimality conditions are derived. Hence we get using \cite[Lemma 3.2]{HinzeUlbrich} and the definition
\begin{equation}
\hat Bu = \widehat{Bu}\in L^2(S_h), \quad u \in U, 
\end{equation}
that the following Lemma holds.
\begin{lemma} \label{1012}
Problem (\ref{2}) has a unique solution $u_h \in U_{ad}$. There exist $\mu_1, ..., \mu_m \in \mathbb{R}$
and $p_h \in X_h$ so that with $y_h=G_h(Bu_h)$
we have
\begin{equation} \label{23}
\begin{aligned}
a_h(v_h, p_h) = \int_{S_h}(y_h-\hat y_0)v_h +\sum_{j=1}^m\mu_jv_h(x_j) \quad \forall v_h \in X_h \\
(R{\hat B}^{*}p_h + \alpha(u_h-u_{0,h}), v-u_h)_U \ge 0 \quad \forall v \in U_{ad}, \\
\mu_j \ge 0, \quad y_h(x_j) \le b(x_j), \quad j=1, ..., m, \text{ and }
\sum_{j=1}^m\mu_j(b(x_j)-y_h(x_j)) =0.
\end{aligned}
\end{equation}
\end{lemma}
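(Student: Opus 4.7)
The plan is to mimic the Euclidean argument in \cite[Lemma 3.2]{HinzeUlbrich} step by step, with $S_h$ and the discrete solution operator $G_h$ taking the roles of the Euclidean domain and its discrete solution operator. The problem (\ref{2})--(\ref{b1}) is a strictly convex minimization over a convex set, subject to finitely many linear inequality constraints on the state, so its structure is the same as in the Euclidean case.

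First I would establish existence and uniqueness. With $y_h = G_h(Bu)$ the functional $J_h(u)$ is quadratic, strictly convex (thanks to $\alpha>0$) and coercive on $U_{ad}$. The feasible set
\begin{equation*}
F_h := \bigl\{u \in U_{ad} : (G_h(Bu))(x_j) \le b(x_j),\ j=1,\dots,m\bigr\}
\end{equation*}
is convex and closed in $U$, since each $u \mapsto (G_h(Bu))(x_j)$ is continuous and linear. To see that $F_h \neq \emptyset$ for $h$ small, I would take $\bar u \in U_{ad}$ from assumption (\ref{10}), so that $G(B\bar u) < b$ pointwise on $S$; since the nodes lie on $S$, the values of $G_h(B\bar u)$ and $\widetilde{G_h(B\bar u)}$ at $x_j$ coincide, and Lemma \ref{b50}(i) applied to $v=B\bar u$ yields $\|G(B\bar u)-\widetilde{G_h(B\bar u)}\|_{L^\infty(S)}\to 0$, so that $(G_h(B\bar u))(x_j) < b(x_j)$ for $h$ sufficiently small. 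Standard weak lower semicontinuity then produces a minimizer, and strict convexity gives uniqueness.

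Next I would derive the KKT system. The point $\bar u$ above provides a Slater point for the finitely many scalar inequality constraints, so the Karush--Kuhn--Tucker theorem for convex programming produces multipliers $\mu_1,\dots,\mu_m\ge 0$ together with the complementarity relations in (\ref{23}). I would then introduce $p_h\in X_h$ as the unique solution of the first equation in (\ref{23}); this is well posed because $a_h$ is coercive on $X_h$. A chain-rule computation, using the symmetry of $a_h$, the self-adjointness of $G_h$ with respect to $a_h$, and the definitions of $\hat B$ and $R$, then converts the Lagrangian stationarity at $u_h$ into the stated variational inequality in (\ref{23}).

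The main obstacle is the Slater/feasibility step for the discrete problem, which requires both the $L^\infty$-convergence of Lemma \ref{b50}(i) and the identification of nodal values on $S$ and $S_h$; once this is settled, the remaining steps transfer essentially verbatim from the Euclidean proof of \cite[Lemma 3.2]{HinzeUlbrich}.
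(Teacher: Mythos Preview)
Your proposal is correct and follows the same route as the paper, which simply observes that for each fixed $h$ the discrete problem on $S_h$ has exactly the structure of \cite[(3.59)]{HinzeUlbrich} and then invokes \cite[Lemma 3.2]{HinzeUlbrich} without spelling out the argument. Your explicit verification of a discrete Slater point via Lemma~\ref{b50}(i) is the one ingredient you add; in the paper this step is postponed to the proof of Lemma~\ref{c5}.
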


We prove the following a priori bounds which are uniform in $h$.
\begin{lemma} \label{c5}
Let $u_h$, $\mu_j$,  $p_h$ and $y_h$ as in the previous Lemma \ref{1012}. Setting $\mu_h = \sum_{j=1}^m \mu_j\delta_{x_j}$  by abusing notation there exists $\bar h>0$ so that
\begin{equation}
\|y_h\|+ \|u_h\|_U+ \|\mu_h\|_{M(S_h)} \le C \quad \text{ for all } 0<h \le \bar h. 
\end{equation}
\end{lemma}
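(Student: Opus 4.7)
The plan is to use the Slater condition (\ref{10}) as in the Euclidean proof, first to bound the state and control, and then to extract a bound on the multiplier via the adjoint equation combined with the variational inequality and complementarity.

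First, I would fix the Slater control $\bar u\in U_{ad}$ from (\ref{10}) and set $\bar y:=G(B\bar u)\in H^2(S)\subset C^0(S)$, so that there exists $\delta>0$ with $\bar y+\delta\le b$ on $S$. Applying Lemma \ref{b50} to $v=B\bar u\in L^2(S)$ yields $\|\bar y-\widetilde{G_h(B\bar u)}\|_{L^\infty(S)}\le ch\|B\bar u\|$, so for some $\bar h>0$ and all $h\le\bar h$ one has $\bar y_h(x_j):=(G_h(B\bar u))(x_j)\le b(x_j)-\delta/2$ at every node $x_j$. Thus $\bar u$ is admissible for (\ref{2})--(\ref{b1}), and by optimality of $u_h$,
\begin{equation*}
\tfrac{\alpha}{2}\|u_h-u_{0,h}\|_U^2\le J_h(u_h)\le J_h(\bar u)\le C,
\end{equation*}
where $C$ is independent of $h$ since $\|\bar y_h-\hat y_0\|_{L^2(S_h)}$ and $\|\bar u-u_{0,h}\|_U$ are bounded uniformly in $h$. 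This yields the uniform bound on $\|u_h\|_U$, and then on $\|y_h\|$ via continuity of $G_h\circ B$ (using that $\|G_h(Bu_h)\|\le c\|Bu_h\|\le c\|u_h\|_U$ by energy estimates in $a_h$).

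Next I would bound $\|\mu_h\|_{M(S_h)}=\sum_j\mu_j$ (recall $\mu_j\ge 0$). Testing the discrete adjoint equation in (\ref{23}) with $v_h=\bar y_h-y_h\in X_h$ and using the defining identities $a_h(y_h,\varphi_h)=\int_{S_h}\hat Bu_h\,\varphi_h$ and $a_h(\bar y_h,\varphi_h)=\int_{S_h}\hat B\bar u\,\varphi_h$, I obtain
\begin{equation*}
\int_{S_h}p_h\,\hat B(\bar u-u_h)=\int_{S_h}(y_h-\hat y_0)(\bar y_h-y_h)+\sum_{j=1}^m\mu_j\bigl(\bar y_h(x_j)-y_h(x_j)\bigr).
\end{equation*}
By the complementarity condition in (\ref{23}), $\sum_j\mu_j y_h(x_j)=\sum_j\mu_j b(x_j)$, hence
\begin{equation*}
\sum_{j=1}^m\mu_j\bigl(\bar y_h(x_j)-y_h(x_j)\bigr)=\sum_{j=1}^m\mu_j\bigl(\bar y_h(x_j)-b(x_j)\bigr)\le -\tfrac{\delta}{2}\,\|\mu_h\|_{M(S_h)}.
\end{equation*}
On the other hand, inserting $v=\bar u$ in the variational inequality in (\ref{23}) and using $(R\hat B^*p_h,w)_U=\int_{S_h}p_h\,\hat Bw$ gives
\begin{equation*}
\int_{S_h}p_h\,\hat B(\bar u-u_h)\ge \alpha(u_h-u_{0,h},u_h-\bar u)_U.
\end{equation*}
Combining the two and rearranging yields
\begin{equation*}
\tfrac{\delta}{2}\,\|\mu_h\|_{M(S_h)}\le \|y_h-\hat y_0\|_{L^2(S_h)}\|\bar y_h-y_h\|_{L^2(S_h)}+\alpha\|u_h-u_{0,h}\|_U\|\bar u-u_h\|_U,
\end{equation*}
and the right-hand side is uniformly bounded by the first step, completing the proof.

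The only delicate point I foresee is justifying the pointwise estimate $\bar y_h(x_j)\le b(x_j)-\delta/2$ at the nodes. It requires transferring the strict inequality $\bar y<b$ on $S$ into a uniform strict inequality at the nodes $x_j\in N_h\subset S$, which uses continuity of $\bar y$ and a mild regularity assumption on $b$ (so that $b(x_j)$ is meaningful, as implicitly required by (\ref{b1})), together with the $L^\infty$-convergence provided by Lemma \ref{b50}. Everything else is a direct adaptation of the Euclidean argument in \cite{HinzeUlbrich}, with the surface geometric error terms absorbed by the estimates from Section \ref{elementary}.
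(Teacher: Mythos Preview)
Your proof is correct and follows essentially the same strategy as the paper: use the Slater point to establish discrete feasibility and bound $\|u_h\|_U,\|y_h\|$ via optimality, then insert a strictly feasible test control into the variational inequality, rewrite via the state and adjoint equations, and exploit complementarity to isolate $\sum_j\mu_j$. The only difference is that the paper tests with the convex combination $v=\tfrac{1}{2}u+\tfrac{1}{2}\tilde u$ of the continuous optimal control and the Slater point (obtaining a margin $\delta/4$), whereas you use the Slater point $\bar u$ directly (obtaining a margin $\delta/2$); your choice is slightly more direct and perfectly sufficient here.
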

\begin{proof}
Let $\tilde u$ denote an element satisfying (\ref{10}). Since $G(B\tilde u)$
is continuous there exists $\delta>0$ so that
\begin{equation}
G(B\tilde u)\le b-\delta \quad \text{in } S. 
\end{equation}
From (\ref{20}) we deduce that there is $h_0>0$ so that for all $0<h\le h_0$
\begin{equation} \label{22}
G_h(B\tilde u) \le \hat b \quad \forall\ 0<h\le h_0
\end{equation}
so that
\begin{equation} 
J_h(u_h) \le J_h(\tilde u)  \quad \forall\ 0<h\le h_0
\end{equation}
and hence 
\begin{equation}\label{24}
\|u_h\|_U, \|y_h\|\le c \quad \forall\ 0<h\le h_0.
\end{equation}
Let $u$ denote the unique solution of (\ref{21}), cf. Theorem \ref{b3}. From (\ref{22}) and (\ref{20}) we infer that $v:=\frac{1}{2}u+\frac{1}{2}\tilde u$ satisfies
\begin{equation} \label{25}
\begin{aligned}
\widetilde{G_h(Bv)} \le& \frac{1}{2}G(Bu) + \frac{1}{2}G(B\tilde u)+ch(\|Bu\|+\|B\tilde u\|) \\
\le& b-\frac{\delta}{2} + ch(\|u\|_U+\|\tilde u\|_U) \\
\le& b-\frac{\delta}{4}
\end{aligned}
\end{equation}
for $0<h\le \bar h$ with $0<\bar h \le h_0$ suitable.

Since $v \in U_{ad}$ properties (\ref{23}), (\ref{24}) and (\ref{25}) imply
\begin{equation}
\begin{aligned}
0 \le& (R\hat B^{*}p_h+\alpha (u_h-u_{0,h}), v-u_h)_U \\
=& \int_{S_h}\hat B(v-u_h)p_h + \alpha(u_h-u_{0,h}, v-u_h)_U \\
=& a_h(G_h(Bv)-y_h, p_h)+\alpha(u_h-u_{0,h}, v-u_h)_U \\
=& \int_{S_h}(G_h(Bv)-y_h)(y_h-\hat y_0)+\sum_{j=1}^m\mu_j (G_h(Bv)-y_h)(x_j)\\
&+\alpha(u_h-u_{0,h}, v-u_h)_U \\
\le& C + \sum_{j=1}^m\mu_j\left(b(x_j)-\frac{\delta}{4}-y_h(x_j)\right) \\
=& C-\frac{\delta}{4}\sum_{j=1}^m\mu_j
\end{aligned}
\end{equation}
where the last equality follows from (\ref{23}). We conclude
\begin{equation}
\|\mu_h\|_{M(S)} \le c
\end{equation}
and the lemma is proved.
\end{proof}

\section{Error estimates} \label{section6}
In the following we assume that $\mu_h\in B_{ch^2}(\hat \mu)\subset M(S_h)$ and state the following theorem.
\begin{theorem} \label{main_result}
 Let $u$ and $u_h$ be the solutions of (\ref{21}) and (\ref{2}) respectively. Then 
\begin{equation}
\|u-u_h\|_U + \|y-\tilde y_h\|_{H^1(S)} \le ch^{\frac{1}{2}}.
\end{equation}
If in addition $Bu \in W^{1,s}(S)$ for some $s\in (1, 2)$ then
\begin{equation}
\|u-u_h\|_U + \|y-\tilde y_h\|_{H^1(S)} \le c h^{\frac{3}{2}-\frac{1}{s}}\sqrt{|\log h|}.
\end{equation}
\end{theorem}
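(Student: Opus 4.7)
The plan follows the Euclidean template of \cite{HinzeUlbrich}. I start by subtracting the two first-order optimality conditions: test (\ref{1001}) with $v=u_h$ and the variational inequality in (\ref{23}) with $v=u$, then add. After using $\|u_{0,h}-u_0\|_U\le ch$ to absorb the data perturbation one is left with
\begin{equation*}
\alpha\|u-u_h\|_U^2 \;\le\; \int_S p\,B(u_h-u) - \int_{S_h} p_h\,\hat B(u_h-u) + ch\,\|u-u_h\|_U.
\end{equation*}
Introducing the auxiliary states $y(u_h):=G(Bu_h)\in H^2(S)$ and $y_h(u):=G_h(Bu)\in X_h$, the first integral equals $\int_S p\,A(y(u_h)-y)$, so (\ref{1000}) with the test function $v=y(u_h)-y$ rewrites it as $\int_S(y-y_0)(y(u_h)-y)+\int_S(y(u_h)-y)\,d\mu$. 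For the second, $y_h-y_h(u)=G_h(B(u_h-u))$ gives $\int_{S_h}\hat B(u_h-u)p_h = a_h(y_h-y_h(u),p_h)$, which the adjoint equation in (\ref{23}) rewrites as $\int_{S_h}(y_h-\hat y_0)(y_h-y_h(u))+\sum_j\mu_j(y_h-y_h(u))(x_j)$. The right-hand side thus splits into a tracking piece $T_1$ and a measure piece $T_2$, to be treated separately.

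For $T_1$ the decisive step is the rearrangement
\begin{equation*}
T_1 = -\|y(u_h)-y\|_{L^2(S)}^2 + \int_S(y(u_h)-\tilde y_h)(y(u_h)-y) + \int_S(\tilde y_h-y_0)\bigl[(y(u_h)-y)-(\tilde y_h-\tilde y_h(u))\bigr] + O(h^2),
\end{equation*}
where the $O(h^2)$ comes from the domain-comparison identity (\ref{100}). Lemma \ref{b50}(i) gives $\|y(u_h)-\tilde y_h\|_\infty\le ch$, while the second bracket equals the lift error $G(B(u-u_h))-\widetilde{G_h(B(u-u_h))}$ and is therefore $\le ch\,\|u-u_h\|_U$ in $L^2(S)$ by linearity of $G,G_h$ and another application of Lemma \ref{b50}(i). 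A Young step absorbs $ch\|y(u_h)-y\|_{L^2}$ into the favourable $-\|y(u_h)-y\|_{L^2}^2$, leaving $T_1\le ch^2+ch\|u-u_h\|_U$.

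The measure piece is the main technical hurdle. Combining the continuous complementarity in (\ref{1002}) with its discrete analogue in (\ref{23}) one finds
\begin{equation*}
T_2 = \int_S (y(u_h)-b)\,d\mu - \sum_{j=1}^m\mu_j\bigl(b(x_j)-y_h(u)(x_j)\bigr).
\end{equation*}
For the first summand, I let $\mathcal I_h b\in X_h$ be the nodewise Lagrange interpolant of $b$, which requires a mild extra regularity assumption on $b$. The constraint $y_h(u_h)(x_j)\le b(x_j)$ and piecewise linearity of $y_h(u_h)-\mathcal I_h b$ force $\tilde y_h(u_h)\le\widetilde{\mathcal I_h b}$ pointwise on $S$; combining this with $\|y(u_h)-\tilde y_h(u_h)\|_\infty\le ch$ from Lemma \ref{b50}(i) and a standard interpolation estimate gives $(y(u_h)-b)^+\le ch$, hence $\int_S(y(u_h)-b)\,d\mu\le ch\|\mu\|_{M(S)}$. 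For the second summand, $b-y\ge 0$ on $S$ together with the nodewise error $|y(x_j)-y_h(u)(x_j)|\le\|y-\tilde y_h(u)\|_\infty\le ch$ gives $b(x_j)-y_h(u)(x_j)\ge -ch$, so the uniform bound on $\sum_j\mu_j$ from Lemma \ref{c5} yields $T_2\le ch$.

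Assembling the three estimates produces $\alpha\|u-u_h\|_U^2\le ch+ch\|u-u_h\|_U$, hence $\|u-u_h\|_U\le ch^{1/2}$; the $H^1$-bound on the state follows by splitting $y-\tilde y_h=(y(u)-y(u_h))+(y(u_h)-\tilde y_h)$, estimating the first by elliptic regularity via $\|u-u_h\|_U$ and the second by the standard $H^1$-estimate from \cite{Dziuk1988}. For the improved rate under $Bu\in W^{1,s}(S)$ I would invoke Lemma \ref{b50}(ii) wherever the $L^\infty$-error of $y=G(Bu)$ or $y(u_h)=G(Bu_h)$ enters, treating the latter by writing $y(u_h)-\tilde y_h=[G(Bu)-\widetilde{G_h(Bu)}]+[G(B(u_h-u))-\widetilde{G_h(B(u_h-u))}]$ and applying (ii) to the first summand (using the regularity of $Bu$) and (i) to the second. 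Every $O(h)$ remainder is then upgraded to $O(h^{3-2/s}|\log h|)$, which, after the same Young absorption, yields the sharper rate $\|u-u_h\|_U\le ch^{3/2-1/s}\sqrt{|\log h|}$.
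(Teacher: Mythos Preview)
Your argument is correct and reaches the same dominant residual term $\|y-\widetilde{y_h(u)}\|_{L^\infty(S)}$ (equivalently $\|\hat y-y^h\|_{L^\infty(S_h)}$) as the paper, but the route is genuinely different. The paper's proof of Theorem~\ref{main_result} does \emph{not} test the continuous adjoint equation with $y(u_h)-y$. Instead it inserts an auxiliary \emph{discrete} adjoint $p^h\in X_h$ satisfying $a_h(w_h,p^h)=\int_{S_h}(\hat y-\hat y_0)w_h+\int_{S_h}w_h\,d\mu_h$, splits $p-\check p_h=(p-\widetilde{p^h})+(\widetilde{p^h}-\check p_h)$, and bounds $\|p-\widetilde{p^h}\|_{L^2(S)}\le ch$ via Theorem~\ref{b10} (the Casas-type $L^2$ estimate for equations with measure right-hand side). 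The remaining piece is then rewritten as $a_h(y_h-y^h,p^h-p_h)$ and expanded using the two discrete adjoint equations. Your approach bypasses Section~\ref{casas} entirely: because $y(u_h)-y\in H^2(S)$, you may test (\ref{1000}) directly, which is simpler and is in fact the device the paper uses only later, in the proof of the Corollary. What the paper's route buys is that the tracking contribution yields directly $-\|\hat y-y_h\|_{L^2(S_h)}^2$ (the actual state error) rather than your $-\|y(u_h)-y\|_{L^2(S)}^2$, so no separate splitting is needed for the $L^2$ state bound; what your route buys is that Theorem~\ref{b10} is never invoked.

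Two small points. First, your sentence ``every $O(h)$ remainder is upgraded to $O(h^{3-2/s}|\log h|)$'' is a slight overstatement: the split $y(u_h)-\tilde y_h=[y-\widetilde{y_h(u)}]+[G(B(u_h-u))-\widetilde{G_h(B(u_h-u))}]$ leaves a term $ch\|u-u_h\|_U$ in the measure contribution which is not upgraded but rather absorbed via Young (producing an innocuous $ch^2$). Second, your use of the nodal interpolant $\mathcal I_h b$ requires, as you note, some regularity of $b$; this is consistent with the paper, which also uses $\|I_h b-\hat b\|_{L^\infty(S_h)}$ without making the hypothesis explicit.
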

\begin{proof}
We test (\ref{23}) with $u_h$ and (\ref{b3}) with $u$. Adding the resulting inequalities gives
\begin{equation}
(R(B^{*}p-\hat B^{*}p_h)-\alpha(u_0-u_{0,h})+\alpha(u-u_h), u_h-u)_U \ge 0.
\end{equation}
We recall the lift operator
\begin{equation}
L^2(S)\rightarrow L^2(S_h), \quad u \mapsto \hat u
\end{equation}
and introduce its adjoint 
\begin{equation}
L^2(S_h)\rightarrow L^2(S), \quad u \mapsto \check u
\end{equation}
which is $O(h^2)$ close to
\begin{equation}
L^2(S_h)\rightarrow L^2(S), \quad u \mapsto \tilde u.
\end{equation}
There holds $\hat B^{*}p_h=B^{*}\check{p_h}$ so that we conclude
\begin{equation} \label{b11}
\alpha \|u-u_h\|_U^2 \le \int_SB(u_h-u)(p-\check{p_h})-\alpha(u_0-u_{0,h}, u_h-u)_U.
\end{equation}
Let $y^h = G_h(Bu)\in X_h$ and denote by $p^h\in X_h$ the unique solution of
\begin{equation}
a_h(w_h, p^h) = \int_{S_h}(\hat y-\hat y_0)w_h + \int_{S_h} w_h d\mu_h \quad \forall Êw_h \in X_h.
\end{equation}
Applying Theorem \ref{b10} with $\tilde \mu = (y-y_0)+\mu$ we infer
\begin{equation}
\|p-\widetilde{p^h}\|_{L^2(S)} \le ch(\|y-y_0\|_{L^2(S)}+\|\mu\|_{M(S)}+\|\hat \mu-\mu_h\|_{M(S)}).
\end{equation}
We rewrite the first term on the right-hand side of (\ref{b11})
\begin{equation} \label{b20}
\begin{aligned}
\int_SB(u_h-u)&  (p-\check p_h) 
\\& = \int_SB(u_h-u)(p-\widetilde {p^h}) + \int_SB(u_h-u)(\widetilde{p^h}-\check p_h) \\
=& \int_SB(u_h-u)(p-\widetilde {p^h}) + \int_{S_h}\widehat{B(u_h-u)}(p^h- p_h)\\
&+ O(h^2)\|u-u_h\|_U\|\widetilde{p^h}-\check p_h\|_{L^2(S)}+I_1 \\
=& \int_SB(u_h-u)(p-\widetilde {p^h}) + a_h(y_h-y^h,p^h- p_h)\\
&+ O(h^2)\|u-u_h\|_U\|\widetilde{p^h}-\check p_h\|_{L^2(S)}+I_1 \\
=& \int_SB(u_h-u)(p-\widetilde{p^h})+\int_{S_h}(\hat y-y_h)(y_h-y^h) \\
&+ \int_{S_h}y_h-y^hd\mu_h - \sum_{j=1}^m\mu_j(y_h-y^h)(x_j)+I_1 \\
=& \int_SB(u_h-u)(p-\widetilde{p^h})-\|\hat y-y_h\|^2_{L^2(S_h)} \\
&+\int_{S_h}(\hat y-y_h)(\hat y-y^h) + \int_{S_h}y_h-y^hd\mu_h + \sum_{j=1}^m\mu_j(y^h-y_h)(x_j)\\
&+I_1 \end{aligned}
\end{equation}
where
\begin{equation}
I_1 = \int_{S_h}\widehat{B(u_h-u)}(p_h-\hat{ \check p}_h)
\end{equation}
and
\begin{equation}
|I_1| \le O(h^2)\|p_h\|_{L^2(S_h)}\|u-u_h\|_U
\end{equation}
After inserting (\ref{b20}) into (\ref{b11}) and using Young's inequality we obtain in view of (3.71), (3.55) and (3.60)
\begin{equation} \label{b40}
\begin{aligned}
\frac{\alpha}{2}&\|u-u_h\|_U^2+\frac{1}{2}\|\hat y-y_h\|^2 \\
&\le c(\|p-\widetilde{p^h}\|^2_{L^2(S)}+\|\hat y-y^h\|^2_{L^2(S_h)}+\|u_0-u_{0,h}\|^2_U) +
 \int_{S_h}(y_h-y^h)d\mu_h \\
&+ \sum_{j=1}^m\mu_j(y^h-y_h)(x_j) +|I_1|.
\end{aligned}
\end{equation}
We have
\begin{equation}
y_h-y^h \le I_hb-\hat b+\hat b-\hat y+\hat y-y^h
\end{equation}
and hence
\begin{equation}
\begin{aligned}
\int_{S_h}y_h-y^h d \mu_h \le& \|\mu_h\|_{M(S_h)}(\|I_h b -\hat b\|_{L^{\infty}(S_h)}
+\|\hat y-y^h\|_{L^{\infty}(S_h)}) \\
&+ O(h^2)\|\hat b-\hat y\|_{L^{\infty}(S_h)} + \int_Sb-y d\mu
\end{aligned}
\end{equation}
where the integral on the right-hand side is less or equal zero. Furthermore, we have
\begin{equation} \label{c4}
\begin{aligned}
\sum_{j=1}^m\mu_j(y^h-y_h)(x_j) =& \sum_{j=1}^m\mu_j(y^h-y+y-b+b-y_h)(x_j) \\
\le& \|y^h-\hat y\|_{L^{\infty}(S_h)} \sum_{j=1}^m\mu_j
\end{aligned}
\end{equation}
where we used $y \le b$ and $\sum_{j=1}^m\mu_j(b-y_h)(x_j)=0$.

Using these estimates we can bound the right-hand side of (\ref{b40}) from above by
\begin{equation} \label{41}
\begin{aligned}
ch^2(1&+\|y-y_0\|^2_{L^2(S)}+\|\mu\|_{M(S)}^2+\|u\|_{L^2(S)}^2)\\
&+O(h^2)\|p_h\|_{L^2(S_h)}\|u-u_h\|_U
+\|y^h-y\|_{L^{\infty}(S_h)}. 
\end{aligned}
\end{equation}

Testing (\ref{23}) with $p_h$ yields
\begin{equation} \label{c6}
\|p_h\|^2_{L^2(S_h)} \le c \|p_h\|_{L^2(S_h)} + \|p_h\|_{L^{\infty}(S_h)} \le c h^{-1}\|p_h\|_{L^2(S_h)}
\end{equation}
where we used for the last inequality an inverse inequality.
We conclude
\begin{equation}
\|p_h\|_{L^2(S_h)}\le c h^{-1}.
\end{equation}
Putting facts together shows that the right-hand side of (\ref{b40}) can be bounded from above by 
\begin{equation} \label{60}
ch^2 + \|y^h-\hat y\|_{L^{\infty}(S_h)}.
\end{equation}
The norm in (\ref{60}) can be estimated by $ch\|u\|_{L^2(S)}$ by using (\ref{20}) or by 
\begin{equation}
ch^{3-\frac{2}{s}}|\log h|\|u\|_{L^2(S)}
\end{equation} 
by using Lemma \ref{b50} depending on the assumption on $Bu$.
\end{proof}

\begin{corollary}
Let $u$ and $u_h$ be as in Theorem \ref{main_result} (i) and assume that $Bu, Bu_h\in L^{\infty}(S)$ are uniformly bounded in the $L^{\infty}$-norm.
Then, for $h$ small enough
\begin{equation}
\|u-u_h\|_U+\|y-y_h\|_{H^1} \le c h |\log h|.
\end{equation} 
\end{corollary}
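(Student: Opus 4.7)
The plan is to redo the proof of Theorem \ref{main_result} verbatim and only upgrade the single estimate that governs the final convergence rate. Inspection of that proof shows that the $h^{1/2}$ rate originates solely from the term $\|y^h-\hat y\|_{L^{\infty}(S_h)}$ that survives in (\ref{60}), where $y^h = G_h(Bu)$ and $\hat y$ is the lift of $y = G(Bu)$. In the basic setting, Lemma \ref{b50}(i) bounds this by $c h \|Bu\|_{L^2(S)}$. The new hypothesis $Bu \in L^{\infty}(S)$ allows me to apply Lemma \ref{b50}(iii) instead, which gives the much sharper
\begin{equation*}
\|y^h - \hat y\|_{L^{\infty}(S_h)} \le c h^2 |\log h|^2 \|Bu\|_{L^{\infty}(S)}.
\end{equation*}

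Once this improvement is plugged into (\ref{b40}), its right-hand side becomes of order $h^2|\log h|^2$. I would then re-run the Young's-inequality step on the mixed term $O(h^2)\|p_h\|_{L^2(S_h)}\|u-u_h\|_U$, using the estimate $\|p_h\|_{L^2(S_h)} \le c h^{-1}$ from (\ref{c6}) (which depends only on the $h$-uniform bounds of Lemma \ref{c5}), and absorb $\tfrac{\alpha}{4}\|u-u_h\|_U^2$ on the left. Taking square roots yields
\begin{equation*}
\|u - u_h\|_U + \|\hat y - y_h\|_{L^2(S_h)} \le c h |\log h|.
\end{equation*}
The hypothesis that $Bu_h$ is uniformly bounded in $L^{\infty}$ enters at this stage to guarantee, via Lemma \ref{b50}(iii) applied to $G_h(Bu_h)$, that $y_h$ is uniformly bounded in $L^{\infty}(S_h)$, so that the complementarity bound (\ref{c4}) produces the correct order in the improved regime.

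It remains to upgrade this $L^2$-state bound to the claimed $H^1$-norm. I would split
\begin{equation*}
\|y - \tilde y_h\|_{H^1(S)} \le \|y - \widetilde{y^h}\|_{H^1(S)} + \|\widetilde{y^h - y_h}\|_{H^1(S)}.
\end{equation*}
The first summand is the standard $H^1$-error for the surface FE approximation of $y = G(Bu)$; elliptic regularity yields $y \in H^2(S)$ with $\|y\|_{H^2(S)} \le c \|Bu\|_{L^2(S)}$, and \cite{Dziuk1988} then gives a bound $c h$. The second summand equals $G_h(B(u-u_h))$; using coercivity of $a_h$ on $X_h$ together with continuity of $B : U \to L^2(S)$, its $H^1(S_h)$-norm is bounded by $c \|u - u_h\|_U \le c h |\log h|$. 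Combining through the norm equivalence of Section \ref{elementary} yields the assertion. The main conceptual ingredient is simply the invocation of Lemma \ref{b50}(iii) at the decisive step; the principal technical care is to verify that every constant surviving from the proof of Theorem \ref{main_result}---in particular those in (\ref{c6}) and in Lemma \ref{c5}---remains independent of $h$ under the new $L^{\infty}$-hypothesis.
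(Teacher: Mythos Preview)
Your approach is correct but differs from the paper's. You propose to re-run the proof of Theorem~\ref{main_result} verbatim and merely replace the estimate $\|y^h-\hat y\|_{L^\infty(S_h)}\le ch\|Bu\|_{L^2}$ by the sharper bound $ch^2|\log h|^2\|Bu\|_{L^\infty}$ from Lemma~\ref{b50}(iii). Inspecting (\ref{41}) and (\ref{60}) shows that this single upgrade indeed drives the right-hand side of (\ref{b40}) down to $O(h^2|\log h|^2)$, since the remaining contributions---$\|p-\widetilde{p^h}\|^2$, $\|u_0-u_{0,h}\|^2$, and the $I_1$-term after (\ref{c6})---are already $O(h^2)$.

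The paper takes a different route: it does \emph{not} introduce the auxiliary discrete adjoint $p^h$ but instead sets $\bar y:=GBu_h$ and rewrites $\int_S B(u_h-u)(p-\check p_h)$ directly via the continuous and discrete optimality systems (\ref{1000}), (\ref{23}). This produces the term $\int_S(\bar y-y)\,d\mu$, which is controlled by $c\|\mu\|_{M(S)}\|\bar y-\tilde y_h\|_{L^\infty(S)}$, and it is \emph{here} that the uniform bound on $\|Bu_h\|_{L^\infty}$ is genuinely used---namely to invoke Lemma~\ref{b50}(iii) for the pair $(\bar y,y_h)=(GBu_h,G_hBu_h)$. Your route never produces such a term, so the hypothesis on $Bu_h$ is in fact superfluous in your argument; your attempted justification for it via (\ref{c4}) is off---(\ref{c4}) uses only $y\le b$, the discrete complementarity, and the bound on $\sum_j\mu_j$ from Lemma~\ref{c5}, none of which involve $\|y_h\|_{L^\infty}$. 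In short, your argument is simpler and actually yields the conclusion under the weaker hypothesis $Bu\in L^\infty(S)$ alone, whereas the paper's decomposition via $\bar y$ is what makes the stated assumption on $Bu_h$ natural. Your final $H^1$-upgrade is correct and fills a step the paper leaves implicit.
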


\begin{proof}
We set $\bar y= GBu_h$ and rewrite the first summand on the right-hand side of (\ref{b11}) as follows
\begin{equation} \label{1013}
\begin{aligned}
\int_S B(u_h-u)(p-\check p_h) =& \int_S pA(\bar y-y)-\int_{S_h}\widehat{B(u_h-u)}\hat{\check p}_h
+O(h^2)\|u_h-u\|_U\|\check p_h\|_{L^2(S)} \\
=& \int_S pA(\bar y-y)- a_h(y_h-y^h, p_h)
+O(h^2)\|u_h-u\|_U \| {\check p}_h\|_{L^2(S)} \\
\stackrel{(\ref{1000}), (\ref{23})}{=}&
 \int_S(y-y_0)(\bar y-y) + \int_S\bar y-y d\mu \\
 &- \int_{S_h}(y_h-\hat y_0)(y_h-y^h) -\sum_{j=1}^m\mu_j(y_h-y^h)(x_j) \\
 &+ O(h^2)\|u_h-u\|_U \|{\check p}_h\|_{L^2(S)}.
\end{aligned}
\end{equation}
We rewrite the sum of the first and the third summand on the right-hand side as
\begin{equation}
\begin{aligned}
\int_S(y-&\tilde y_h+\tilde y_h-y_0)(\bar y- y)  - \int_{S_h}(y_h-\hat y_0)(y_h-y^h) \\
=& \int_S(y-\tilde y_h)(\bar y- \tilde y_h+\tilde y_h-y) +O(h^2)\|\tilde y_h-y_0\|_{L^2(S)}\|\bar y-y\|_{L^2(S)}\\
&+ \int_{S_h}(y_h-\hat y_0)(\hat{\bar y}-y_h+y^h-\hat y) \\
\le& -\|y-\tilde y_h\|_{L^2(S)}+\|y_h-\hat y_0\|_{L^2(S_h)}(\|\hat{\bar y}-y_h\|_{L^2(S_h)}+\|y^h-\hat y\|_{L^2(S_h)})\\
&+ O(h^2)\|\tilde y_h-y_0\|_{L^2(S)}\|u-u_h\|_U
.
\end{aligned}
\end{equation}
We use
\begin{equation}
\begin{aligned}
\bar y-y \le \bar y-\tilde y_h+\tilde y_h-\widetilde{I_hb}+\widetilde{I_hb}-b+b-y
\end{aligned}
\end{equation}
and
\begin{equation}
y_h \le I_hb \quad \wedge \quad \int_Sb-yd\mu =0
\end{equation}
so that
\begin{equation}
\int_S\bar y-y d\mu \le c \|\mu\|_{M(S)}\left\{\|\bar y-\tilde y_h\|_{L^{\infty}(S)}+\|\widetilde{I_hb}-b\|_{L^{\infty}(S)}\right\}.
\end{equation}
Using (\ref{c4}) and putting facts together leads to
\begin{equation}
\begin{aligned}
\|u-u_h\|^2_U +& \|y-\tilde y_h\|^2_{L^2(S)} \le \|u_0-u_{0,h}\|^2_U\\
&+\|\tilde y_h-y_0\|_{L^2(S)}(\|\hat {\bar y}-y_h\|_{L^2(S_h)}+\|y^h-\hat y\|_{L^2(S_h)}\\
&+O(h^2)\|u-u_h\|_U) \\
&+c(\|\bar y-\tilde y_h\|_{L^{\infty}(S)}+\|\widetilde{I_hb}-b\|_{L^{\infty}(S)}+\|y^h-\hat y\|_{L^{\infty}(S_h)}) \\
&+ + O(h^2)\|u_h-u\|_U \|{\check p}_h\|_{L^2(S)}
\end{aligned}
\end{equation}
Using Lemma \ref{c5}, Lemma 3.1 (iii) and (\ref{c6}) then yields
\begin{equation}
\|u-u_h\|^2_U + \|y-\tilde y_h\|^2_{L^2(S)} \le c(h^2+h^2|\log h|^2)
\end{equation}
so that the claim follows.
\end{proof}

\end{document}